\newtheorem{rem}{\textbf{Remark}}
\newtheorem{lemma}{\textbf{Lemma}}
\newtheorem{prop}{\textbf{Proposition}}
\newtheorem{thm}{\textbf{Theorem}}
\newtheorem{ass}{\textbf{Assumption}}
\newcommand{\sat}{\mathbf{Sat}}
\DeclareMathAlphabet{\pazocal}{OMS}{zplm}{m}{n}
\title{\LARGE \bf
Optimal Safe Controller Synthesis: A Density Function Approach
}
\author{Yuxiao Chen, Mohamadreza Ahmadi, and Aaron D. Ames
\thanks{The authors are with the Department of Mechanical and Civil Engineering, California Institute of Technology,
         Pasadena, CA, 91106, USA. Emails:
        {\tt\small \{chenyx, mrahmadi, ames\}@caltech.edu}}
}
\begin{document}

\maketitle
\thispagestyle{empty}
\pagestyle{empty}

\begin{abstract}
This paper considers the synthesis of optimal safe controllers based on density functions. We present an algorithm for robust constrained optimal control synthesis using the duality relationship between the density function and the value function. The density function follows the Liouville equation and is the dual of the value function, which satisfies Bellman's optimality principle. Thanks to density functions, constraints over the distribution of states, such as safety constraints, can be posed straightforwardly in an optimal control problem. The constrained optimal control problem is then solved with a primal-dual algorithm. This formulation is extended to the case with external disturbances, and we show that the robust constrained optimal control can be solved with a modified primal-dual algorithm. We apply this formulation to the problem of finding the optimal safe controller that minimizes the cumulative intervention. An adaptive cruise control (ACC) example is used to demonstrate the efficacy of the proposed, wherein we compare the result of the density function approach with the conventional control barrier function (CBF) method.
\end{abstract}
\section{Introduction}\label{sec:intro}
Safety is one of the fundamental goals of control synthesis. Controller design techniques, such as control barrier function (CBF) methods, have been proved to be powerful tools for guaranteeing safety of dynamical systems and their application spans over robotics\cite{wang2017safety,nguyen20163d,chen2018obstacle,glotfelter2017nonsmooth} and transportation systems~\cite{nilsson2014preliminary,chen2018validating}. One of the strengths of the CBF is its ability to work with the legacy controller, e.g. a tracking controller,  in a plug-and-play fashion. Given any legacy controller, the CBF acts as the supervisory controller, filtering the input from the legacy controller with minimum intervention necessary to guarantee safety. This supervisory control structure is shown in Fig.\ref{fig:CBF}.

The computation of CBFs typically centers around invariance conditions, which can be enforced via polytopic projection\cite{nilsson2014preliminary}, robust optimization\cite{chen2018data}, and sum of squares constraints \cite{xu2015robustness}. However, one caveat of the CBF method is that it operates myopically, i.e., the CBF is a function of only the current state, and the intervention only depends on the current situation. Although the intervention at every time instance is minimized, the cumulative intervention is not necessarily minimized. If a CBF controller is designed too conservatively, it may use unnecessary intervention when the situation is not dangerous; if a CBF controller is too optimistic, it may allow the state to get too close to the danger set and have to invoke large intervention to prevent the state from entering the danger set. Given a legacy controller, it is not clear how to synthesize the optimal safe controller that minimizes the cumulative intervention. Besides, the computation of the CBF is nontrivial, almost all numerical methods suffer from different levels of conservatism, which further compromises the performance.
\begin{figure}[t]
  \centering
  \includegraphics[width=0.8\columnwidth]{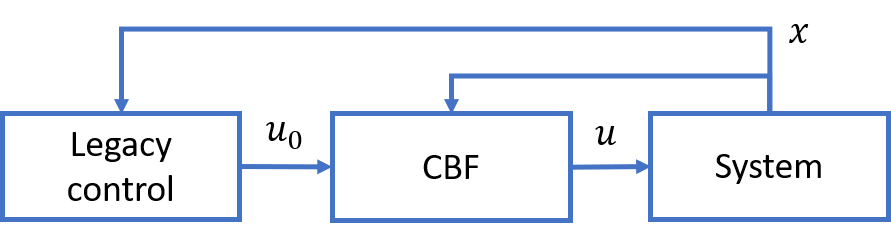}
  \caption{CBF as the supervisory controller}\label{fig:CBF}
\end{figure}

In terms of optimality, optimal control is one of the most well-studied problems in control. Bellman's principle of optimality \cite{bellman2013dynamic} and Pontryagin's maximum principle \cite{pontryagin2018mathematical} are two fundamental theories that solve optimal control problems. With Hamilton-Jacobi-Bellman (HJB) partial differential equation (PDE), we can even solve for the optimal control strategy for the whole state space \cite{bardi2008optimal}. However, when there are safety constraints, such as those requiring that the state should never leave a set or enter a set, the HJB PDE cannot encode those constraints in a clear way. The constrained optimal control problem can be solved for a given initial condition with Lagrangian multipliers \cite{bergounioux1997augemented}, but it only gives solution to a single initial condition rather than the whole state space, and computing the solution online is typically not feasible due to the complexity.

Density functions, proposed by Rantzer in \cite{rantzer2001dual}, are the dual of Lyapunov functions and can be used to verify the stability of nonlinear systems~\cite{prajna2004nonlinear} as well as reachability analysis of polynomial systems~\cite{prajna2007convex}. One related concept is the occupation measure studied in \cite{lasserre2008nonlinear,zhao2019optimal,korda2014convex,majumdar2014convex}, which considers the dual relationship between functions and measures and  solve the optimal control problem with moment programming. We recently showed in~\cite{chen2019duality} that the density function is the dual of the value function in optimal control and one can enforce safety constraints on the density function and solve the constrained optimal control problem with a primal-dual algorithm. In this paper, we take advantage of this duality relationship to design controllers that are both safe and optimal. Furthermore, we consider the case where the dynamical system is subject to exogenous disturbances and propose a synthesis procedure for controllers with safety and optimality. We elucidate our proposed methodology with an adaptive cruise control example.

The rest of the paper is organized as follows. In the next section, we review some preliminary notions and definitions used in the paper. In Section~\ref{sec:dual}, we discuss the duality between the density functions and the optimal control value functions. In Section~\ref{sec:robust_density}, we propose a technique to synthesize controllers that guarantee safety and optimality. In Section~\ref{sec:result}, we elucidate the efficacy of the proposed methodology with an adaptive cruise control example. Finally, in Section~\ref{sec:conclusion}, we conclude the paper.




\textit{Nomenclature}  $\mathbb{N}$ denotes the set of natural numbers, $\mathbb{N}_+$ denotes the positive natural number, $\mathbb{R}$ denotes the set of real numbers. Given a differential equation $\dot{x}=f(x)$, where $f:\pazocal{X}\to \mathbb{R}^n$ is a locally Lipschitz function, $\Phi_f(x_0,T)$ denotes the flow map of the dynamics with initial state $x_0$ and horizon $T$. ${\left\langle {a,b} \right\rangle _\pazocal{X}}=\int_\pazocal{X} {a(x) \cdot b(x)dx} $ denotes the inner product of two functions $a$ and $b$. $\mathbf{0}$ denotes a vector of all zeros or a function that is always zero, depending on the context. $\mathds{1}_S$ denotes the indicator function of a set $S$. We use bold font $\mathbf{u}$ to denote a controller that maps state to the control input, and normal font $u$ to denote the actual control input. For a variable $x\in\pazocal{X}$, $x[\cdot]$ denotes its trajectory over time and $\pazocal{X}[\cdot]$ denote the set of possible trajectories.


\section{Preliminaries}\label{sec:density}

In this section, we review several notion and definitions used throughout the paper.
\subsection{Density functions for dynamical systems}\label{sec:density_intro}

Density function can be understood as the measure of state concentration in the state space. Given a dynamical system
\begin{equation}\label{dyn_no_control}
  \dot{x}=f(x),\;x\in\pazocal{X}\subseteq\mathbb{R}^n,
\end{equation}
the density function $\rho:[0,\infty)\times\pazocal{X}\to\mathbb{R}$ describes the concentration of states, and its evolution follows the Liouville PDE:
\begin{equation}\label{eq:liouville}
  \begin{aligned}
\frac{{\partial \rho }}{{\partial t}} + \nabla  \cdot \left( {\rho\cdot f} \right) &= \phi(t,x,\rho), \\
\rho \left( {0,x} \right) &= {\rho _0}\left( x \right),
\end{aligned}
\end{equation}
where $\phi:[0,\infty)\times\pazocal{X}\times\mathbb{R}\to\mathbb{R}$ is the supply function, $\phi(t,x_0,\rho(x_0,t))>0$ denotes a source representing where the new states appear, and $\phi(t,x_0,\rho(x_0,t))<0$ denotes a sink, indicating that some states exit the system at $x_0$, time $t$. We let $\phi$ depend on $\rho$ to allow more flexible characterization of the supply.

The Liouville PDE can be transformed and solved as an ordinary differential equation (ODE) since
\begin{equation}\label{eq:Liouville_ODE_prep}
  \frac{{\partial \rho }}{{\partial t}} + \nabla  \cdot \left( {\rho \cdot f} \right) = {\left. {\frac{{d\rho }}{{dt}}} \right|_{\dot x = f(x)}}+(\nabla\cdot f) \rho = \phi.
\end{equation}
This implies that we can integrate the following ODE to get the density function alone the trajectory of the dynamic system $\dot{x}=F(x)$ as
\begin{equation}\label{eq:Liouville_ODE}
  \left[ {\begin{array}{*{20}{c}}
{\dot x}\\
{\dot \rho }
\end{array}} \right] = \left[ {\begin{array}{*{20}{c}}
{f\left( {x} \right)}\\
{\phi \left( {t,x,\rho} \right) - \nabla  \cdot f\left( {t,x} \right)\rho }
\end{array}} \right].
\end{equation}

Then, given an initial density distribution $\rho(0,\cdot)=\rho_0$ and supply $\phi$, the density at state $x_T$, time $T$ can be computed with the following two step process:
\begin{itemize}
  \item First, solve the reverse ODE of $\dot{x}=-f(x)$ with initial condition $x_T$ to get $\Phi_{-f}(x,T)=\Phi_f(x,-T)$.
  \item Then, solve the extended ODE in \eqref{eq:Liouville_ODE} with initial condition $[\Phi_f(x,-T),\rho_0(\Phi_f(x,-T))]^\intercal$ to time $T$.
\end{itemize}
For more detail on the procedure of computing the density function, see~\cite{chen2019duality}.

If $\phi$ does not depend on time and as $t\to\infty$, $\frac{\partial \rho}{\partial t}=0$ everywhere, we say that $\rho$ reaches a stationary distribution, and we denote the stationary density $\rho_s:\pazocal{X}\to\mathbb{R}$, which satisfies
 \begin{equation}\label{eq:rho_s}
   \phi \left(x, \rho_s  \right) - \nabla  \cdot \left( {{\rho_s } \cdot f} \right) = 0.
 \end{equation}
\begin{rem}
  The existence and uniqueness of $\rho_s$ can be guaranteed for certain supply functions. For the case discussed in Section \ref{sec:robust_density}, they can be guaranteed if the discount factor $\kappa$ is large enough. Due to the space limit, we omit the analysis.
\end{rem}

\subsection{Control Barrier Functions}
CBF is a popular and powerful tool to guarantee safety for a dynamical system. While there are several forms of CBFs, we take the zeroing barrier introduced in \cite{ames2017control} as an example. Consider the following dynamical system with disturbances
\begin{equation}\label{eq:dyn_robust}
  \dot{x}=F(x,u,d),
\end{equation}
where $x\in\pazocal{X}$, $u\in\pazocal{U}$ and $d\in\pazocal{D}$ are the state, input and disturbance respectively. $d$ can be measured or unmeasured. Suppose there exists a function $b:\pazocal{X}\to\mathbb{R}$ that satisfies
\begin{equation}\label{eq:CBF}
  \begin{array}{*{20}{l}}
\forall x \in {\pazocal{X}_s},&b(x) \ge 0\\
\forall x \in {\pazocal{X}_d},&b(x) < 0\\
\forall x \in \left\{ {x\mid b(x) \ge 0} \right\},&\\
\exists u \in \pazocal{U}\;\mathrm{s.t.}\;\forall d \in \pazocal{D},&
\dot b + \alpha \left( b \right) \ge 0,
\end{array}
\end{equation}
where $\pazocal{X}_s$ is the safe set and $\pazocal{X}_d$ is the danger set where we want to keep the state away from. $\alpha(\cdot)$ is a class-$\pazocal{K}$ function, i.e., $\alpha(\cdot)$ is strictly increasing and satisfies $\alpha(0)=0$. Then, we can use the following optimization to design a controller that keeps the system safe
\begin{equation}\label{eq:CBF_QP}
  \begin{aligned}
u = \mathop {\arg \min }\limits_{u \in \pazocal{U}} &\left\| {u - {u_0}} \right\|\\
\mathrm{s.t.}&\nabla b\cdot F\left( {x,u,d} \right) + \alpha \left( b \right) \ge 0,
\end{aligned}
\end{equation}
where $u_0$ is the input of the legacy controller. It can be proved that the controller in \eqref{eq:CBF_QP} would render the set $\left\{ {x\mid b(x) \ge 0} \right\}$ invariant and keep any state that starts within $\pazocal{X}_s$ from entering $\pazocal{X}_d$.

\section{Duality between density function and value function}\label{sec:dual}
In this section, we show the duality relationship between the density function and the value function. We consider the following infinite horizon discounted cost function
\begin{equation}\label{eq:discount_cost}
\begin{aligned}
  V(x_0) &= \int_0^\infty  {{e^{ - \kappa \tau }}C\left( {x\left( \tau  \right),u\left( \tau  \right)} \right)d\tau },\\
  \mathrm{s.t.}~\dot{x} &=F(x,u),~\quad x(0)=x_0,
\end{aligned}
\end{equation}
where $\kappa>0$ is the discount factor, and  the dynamics depends on the control input. 
\begin{rem}
Disturbance is not allowed here since it would render the density function undetermined. We will later show how to incorporate disturbance in Section \ref{sec:robust_density}.
\end{rem}
 This is an infinite horizon optimal control problem. Suppose a positive supply function $\phi_+$ is given, i.e., initial states emerge with rate $\phi_+(x)$ at $x$, then we want to minimize the overall cost rate, which can be computed with the following optimization problem:
\begin{equation}\label{eq:primal_discount}
\begin{aligned}
J_p^\star=&\left\langle {V,{\phi _ + }} \right\rangle \\
\mathrm{s.t.}\;&C + \nabla V \cdot F_{\mathbf{u}^\star} -\kappa V = 0\\
&{\mathbf{u}^ \star }(x) = \mathop {\arg \min }\limits_{ u\in\pazocal{U}} \;C + \nabla V \cdot F,
\end{aligned}
\end{equation}
where $F_{\mathbf{u}^\star}(x)\doteq F(x,\mathbf{u}^\star(x))$. 
The cost can be written this way since for every initial state entering into the state space at $x$, it induces a cost $V(x)$. The second and third line is simply Bellman's optimality condition. We denote this problem as the primal optimization.

We posit the following assumption, which apply to most applications.
\begin{ass}\label{ass:supply_support}
  It is assumed that $\phi_+$ is nonzero only inside a compact set, and zero everywhere else.
\end{ass}

If the Liouville PDE converges to a stationary density function $\rho_s$, the overall cost rate can also be computed as the inner product of $\rho_s$ and the running cost, and the two values should be equivalent. Therefore, the dual problem in density function is formulated as
\begin{equation}\label{eq:dual_opt_discount_factor}
    \begin{aligned}
J_d^\star=\mathop {\min }\limits_{\rho_s ,\mathbf{u}}& ~~{\left\langle {\rho_s ,C_\mathbf{u}} \right\rangle}_{\pazocal{X}}\;\\
\mathrm{s.t.}\;&\nabla  \cdot \left( {\rho_s \cdot F_\mathbf{u} } \right) = \phi_+-\kappa\rho_s ,\\
   &\forall x\in \pazocal{X},\mathbf{u}(x)\in\pazocal{U} ,\;\rho_s(x)  \ge 0,
\end{aligned}
\end{equation}
where $C_\mathbf{u}(x)\doteq C(x,\mathbf{u}(x))$, and $-\kappa\rho_s$ is the negative supply caused by the discount factor. Before presenting the main result, we need the following additional assumption. 
\begin{ass}\label{ass:differentiable}
  The solutions $\rho_s$ and $V$ to \eqref{eq:primal_discount} and \eqref{eq:dual_opt_discount_factor} are bounded and differentiable.
\end{ass}
We are now ready to present the main result of this paper: 
\begin{thm}\label{thm:duality}
 For a control system described in \eqref{eq:discount_cost}, if $F$ is bounded, i.e., $\forall x\in\pazocal{X}, \forall u\in\pazocal{U}, {\left\| F(x,u) \right\|_2}\le M$, and Assumption \ref{ass:supply_support}, \ref{ass:differentiable} are true, the optimization in \eqref{eq:primal_discount} and \eqref{eq:dual_opt_discount_factor} are dual to each other. If both problems are feasible, there is no duality gap.
\end{thm}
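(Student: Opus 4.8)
The plan is to establish the duality through an infinite-dimensional Lagrangian argument, treating the value function $V$ as the multiplier attached to the stationarity (Liouville) constraint of the density problem \eqref{eq:dual_opt_discount_factor}. First I would form the Lagrangian of \eqref{eq:dual_opt_discount_factor} by pairing $V$ with the equality constraint $\nabla\cdot(\rho_s F_\mathbf{u}) + \kappa\rho_s - \phi_+ = \mathbf{0}$,
\begin{equation*}
L(\rho_s,\mathbf{u},V) = \langle \rho_s, C_\mathbf{u}\rangle_{\pazocal{X}} + \langle V,\, \phi_+ - \kappa\rho_s - \nabla\cdot(\rho_s F_\mathbf{u})\rangle_{\pazocal{X}}.
\end{equation*}
The central manipulation is integration by parts on the divergence term, which gives the identity $\langle V,\nabla\cdot(\rho_s F_\mathbf{u})\rangle_{\pazocal{X}} = -\langle \nabla V\cdot F_\mathbf{u},\rho_s\rangle_{\pazocal{X}}$; substituting and regrouping yields
\[
L = \langle V,\phi_+\rangle_{\pazocal{X}} + \langle \rho_s,\, C_\mathbf{u} + \nabla V\cdot F_\mathbf{u} - \kappa V\rangle_{\pazocal{X}}.
\]

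Next I would compute the dual function by infimizing over the primal variables $\rho_s\ge 0$ and $\mathbf{u}$. Since $\rho_s$ is nonnegative pointwise and the bracketed coefficient can be driven to its pointwise minimum over $u\in\pazocal{U}$, the infimum is finite only when $\min_{u\in\pazocal{U}}(C + \nabla V\cdot F) - \kappa V \ge 0$ holds everywhere, in which case it equals $\langle V,\phi_+\rangle_{\pazocal{X}}$ and is attained at $\rho_s=\mathbf{0}$. This shows the Lagrangian dual of \eqref{eq:dual_opt_discount_factor} is $\max_V \langle V,\phi_+\rangle_{\pazocal{X}}$ subject to the relaxed Bellman inequality $\min_{u\in\pazocal{U}}(C + \nabla V\cdot F) \ge \kappa V$, which is precisely the relaxation of the equality in \eqref{eq:primal_discount}, with the inner $\arg\min$ recovering $\mathbf{u}^\star$.

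To close the argument with no duality gap I would prove weak duality directly and then exhibit matching optimizers. For any dual-feasible $V$ and any primal-feasible $(\rho_s,\mathbf{u})$, the same integration-by-parts identity together with the Liouville constraint gives
\[
\langle V,\phi_+\rangle_{\pazocal{X}} = \langle \rho_s,\, \kappa V - \nabla V\cdot F_\mathbf{u}\rangle_{\pazocal{X}} \le \langle\rho_s, C_\mathbf{u}\rangle_{\pazocal{X}},
\]
where the inequality uses $\kappa V - \nabla V\cdot F_\mathbf{u} \le C_\mathbf{u}$ and $\rho_s\ge 0$; hence $J_p^\star \le J_d^\star$. Taking the Bellman-equality solution $V^\star$ of \eqref{eq:primal_discount} with its induced control $\mathbf{u}^\star$ and the corresponding stationary density $\rho_s^\star$ of \eqref{eq:rho_s}, every step above becomes an equality because the Bellman relation holds with equality along $\mathbf{u}^\star$, yielding $\langle\rho_s^\star, C_{\mathbf{u}^\star}\rangle_{\pazocal{X}} = \langle V^\star,\phi_+\rangle_{\pazocal{X}}$ and therefore $J_p^\star = J_d^\star$.

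I expect the main obstacle to be rigorously discharging the boundary flux $\int_{\partial\pazocal{X}} V\rho_s\,(F_\mathbf{u}\cdot n)\, dS$ that arises in the integration by parts, which must vanish for the identity $\langle V,\nabla\cdot(\rho_s F_\mathbf{u})\rangle_{\pazocal{X}} = -\langle\nabla V\cdot F_\mathbf{u},\rho_s\rangle_{\pazocal{X}}$ to hold. This is exactly where the hypotheses enter: the bound $\|F\|_2\le M$, the boundedness and differentiability of $V$ and $\rho_s$ from Assumption \ref{ass:differentiable}, and the compact support of $\phi_+$ from Assumption \ref{ass:supply_support} (which, with $\kappa>0$, forces the stationary mass to decay so the flux at infinity or on $\partial\pazocal{X}$ vanishes). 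A secondary subtlety is the bilinear coupling of $\rho_s$ and $\mathbf{u}$ in both objective and constraint; I would handle it by observing that the pointwise inner minimization over $u$ decouples cleanly inside the Lagrangian, so the reduction to the scalar Bellman condition is legitimate even though the problem is not literally a linear program.
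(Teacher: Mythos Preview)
Your proposal is correct and follows essentially the same Lagrangian duality argument as the paper: form the Lagrangian of \eqref{eq:dual_opt_discount_factor} with the value function as multiplier on the Liouville constraint, integrate by parts, and recover the Bellman condition. The paper isolates the vanishing boundary-flux step as a separate lemma (Lemma~\ref{lemma:adjoint_condition}) and uses explicit KKT conditions with a multiplier $\lambda\ge 0$ for the constraint $\rho_s\ge 0$, whereas you handle nonnegativity by direct infimization over $\rho_s\ge 0$ and establish strong duality by exhibiting matching optimizers---these are equivalent presentations of the same idea.
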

Before proving Theorem \ref{thm:duality}, we need the following lemma:
\begin{lemma}\label{lemma:adjoint_condition}
   For a $\rho_s$ that satisfies \eqref{eq:rho_s} with $\phi(x,\rho_s)=\phi_+(x)-\kappa \rho_s$, $\phi_+$ satisfying Assumption \ref{ass:supply_support}, let $S(R)$ be the sphere with radius $R$ centered around the origin. For a given vector field $\dot{x}=f(x)$ satisfying $\forall x\in\pazocal{X},\left\|f(x)\right\|_2\le M$, define
  \begin{equation}\label{eq:sphere_int}
    g(R) = \int_{S(R)} {{\rho _s}f \cdot \mathord{\buildrel{\lower3pt\hbox{$\scriptscriptstyle\rightharpoonup$}}
\over n} } dS,
  \end{equation}
  then $\mathop {\lim }\limits_{R \to \infty } g\left( R \right) = 0$.
\end{lemma}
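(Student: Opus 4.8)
The plan is to turn the surface flux $g(R)$ into a volume quantity via the divergence theorem and then exploit the stationary balance \eqref{eq:rho_s}. Letting $B(R)$ denote the ball of radius $R$, I would first write
\[
g(R)=\int_{S(R)}\rho_s\, f\cdot\vec n\,dS=\int_{B(R)}\nabla\cdot(\rho_s f)\,dx=\int_{B(R)}\big(\phi_+-\kappa\rho_s\big)\,dx,
\]
where the last equality substitutes \eqref{eq:rho_s} with $\phi(x,\rho_s)=\phi_+(x)-\kappa\rho_s$. By Assumption \ref{ass:supply_support}, $\phi_+$ is supported in a compact set, so for every $R$ larger than some $R_0$ the term $\int_{B(R)}\phi_+\,dx$ equals the constant $\Phi_+\doteq\int_{\pazocal X}\phi_+\,dx$. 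Writing $G(R)\doteq\int_{B(R)}\rho_s\,dx$, this yields the identity $g(R)=\Phi_+-\kappa G(R)$ for all $R\ge R_0$.

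Next I would extract a differential inequality in $R$. Since $\rho_s\ge0$, the function $G$ is non-decreasing and, by the coarea/layer-cake formula, $G'(R)=h(R)$ with $h(R)\doteq\int_{S(R)}\rho_s\,dS\ge0$; hence $g'(R)=-\kappa h(R)\le0$, so $g$ is non-increasing. On the other hand, the bound $\|f\|_2\le M$ together with $\rho_s\ge0$ gives $|g(R)|\le\int_{S(R)}\rho_s\,\|f\|_2\,dS\le M h(R)$. Combining the two observations produces the key relation $|g(R)|\le -\tfrac{M}{\kappa}\,g'(R)$ for all $R\ge R_0$.

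It remains to conclude from this inequality that $g(R)\to0$, and this sign/growth analysis is the delicate step. Because $g$ is monotone it has a limit $L\in[-\infty,\Phi_+]$. I would first rule out $L<0$: if $g(R^\ast)<0$ for some $R^\ast\ge R_0$, then $g\le g(R^\ast)<0$ thereafter, and $w\doteq -g$ satisfies $w'\ge\tfrac{\kappa}{M}w$, so $w(R)\ge w(R^\ast)e^{\frac{\kappa}{M}(R-R^\ast)}$ grows exponentially; but $w(R)=\kappa G(R)-\Phi_+$, and since $\rho_s$ is bounded by Assumption \ref{ass:differentiable}, $G(R)\le(\sup\rho_s)\cdot\mathrm{Vol}(B(R))$ grows only polynomially in $R$, a contradiction. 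Hence $g(R)\ge0$ for all $R\ge R_0$, the inequality becomes $g'(R)\le-\tfrac{\kappa}{M}g(R)$, and Gronwall then gives $0\le g(R)\le g(R_0)e^{-\frac{\kappa}{M}(R-R_0)}\to0$, proving the claim. The main obstacle is exactly this last step: the differential inequality is one-sided, so it is the boundedness of $\rho_s$ (exponential growth being incompatible with the polynomial volume bound) that pins down the sign of $g$ and closes the argument; mere integrability of $\rho_s$ would not by itself preclude oscillation of $h(R)$.
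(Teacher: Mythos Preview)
Your argument is essentially the paper's: both turn the surface flux into a volume integral via the divergence theorem, substitute the stationary Liouville identity \eqref{eq:rho_s}, and arrive at the Gronwall-type differential inequality $g'(R)\le -(\kappa/M)\,g(R)$ for $R\ge R_0$ (the paper applies the divergence theorem to the annulus to compute $g'$ directly, whereas you apply it to the ball and then differentiate, which is equivalent). Your explicit sign analysis---ruling out $g<0$ by contrasting the exponential blow-up forced by $w'\ge(\kappa/M)w$ with the polynomial growth of $G(R)$ under the boundedness in Assumption~\ref{ass:differentiable}---is actually a useful addition, since the paper's one-sided bound $g(R)\le M\int_{S(R)}\rho_s\,dS$ by itself does not preclude $g\to-\infty$.
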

\begin{proof}

  Take the derivative of $g$ over $R$:
  \begin{equation}\label{eq:deriv_g}
    \frac{{dg}}{{dR}} = \mathop {\lim }\limits_{\Delta R \to 0} \frac{{\int_{S(R+\Delta R)} {{\rho _s}f \cdot \mathord{\buildrel{\lower3pt\hbox{$\scriptscriptstyle\rightharpoonup$}}
\over n} } dS - \int_{S(R )} {{\rho _s}f \cdot \mathord{\buildrel{\lower3pt\hbox{$\scriptscriptstyle\rightharpoonup$}}
\over n} } dS}}{{\Delta R}}
  \end{equation}
  Note that the numerator is the surface integral of the thin hull between $S(R)$ and $S(R+\Delta R)$, defined as
  \begin{equation}\label{eq:hull}
    H\left( {R,R + \Delta R} \right) \triangleq \left\{ {x\in\mathbb{R}^n\mid R \le \left\| x \right\| \le R + \Delta R} \right\}
  \end{equation}
  Then by the divergence theorem:
  \begin{equation}\label{eq:lemma_div}
    \begin{aligned}
&\int_{S(R + \Delta R)} {{\rho _s}f \cdot \mathord{\buildrel{\lower3pt\hbox{$\scriptscriptstyle\rightharpoonup$}}
\over n} } dS - \int_{S(R)} {{\rho _s}f \cdot \mathord{\buildrel{\lower3pt\hbox{$\scriptscriptstyle\rightharpoonup$}}
\over n} } dS\\
 =& \oint_{\partial H\left( {R,R + \Delta R} \right)} {{\rho _s}f \cdot \mathord{\buildrel{\lower3pt\hbox{$\scriptscriptstyle\rightharpoonup$}}
\over n} } dS\\
 =& \int_{H\left( {R,R + \Delta R} \right)} {\nabla  \cdot \left( {{\rho _s}f} \right)} dx\\
 =& \int_{H\left( {R,R + \Delta R} \right)} {\left( {{\phi _ + } - \kappa {\rho _s}} \right)} dx
\end{aligned}
  \end{equation}
  The argument $x$ is omitted for notational simplicity. By Assumption \ref{ass:supply_support}, there exists a $R_0>0,\forall \left\| x \right\|\ge R_0,\phi_+(x)=0$. By the boundedness of $f$,
  \[g\left( R \right) = \int_{S(R)} {{\rho _s}f \cdot \mathord{\buildrel{\lower3pt\hbox{$\scriptscriptstyle\rightharpoonup$}}
\over n} } dS \le M\int_{S(R)} {{\rho _s}} dS.\]
Therefore
\begin{equation}\label{lemma_proof_conclusion}
\begin{aligned}
\forall R \ge {R_0},\frac{{dg}}{{dR}} &= \mathop {\lim }\limits_{\Delta R \to 0}  - \frac{1}{{\Delta R}}\int_{H\left( {R,R + \Delta R} \right)} {\kappa {\rho _s}} dx\\
 &=-\frac{1}{\Delta R} \kappa \int_{S(R)} {{\rho _s}} dS \cdot \Delta R \le  - \frac{{g\left( R \right)}}{M}
\end{aligned}
\end{equation}
  which indicates that $\mathop {\lim }\limits_{R \to \infty } g\left( R \right) = 0$.
\end{proof}
\begin{proof}[Proof of Theorem \ref{thm:duality}]
We show one direction, from \eqref{eq:dual_opt_discount_factor} to \eqref{eq:primal_discount}, and the other direction is similar. The Lagrangian is formulated as
\begin{equation}\label{eq:lagrangian_main_thm}
 \pazocal{L} = \left\langle {C_\mathbf{u},{\rho _s}} \right\rangle  + \left\langle {\mu ,{\phi _ + } - \kappa {\rho _s} - \nabla  \cdot \left( {{\rho _s}\cdot F_\mathbf{u}} \right)} \right\rangle-\left\langle {\lambda,\rho_s}  \right\rangle,
\end{equation}
where $\mu:\pazocal{X}\to\mathbb{R}$ and $\lambda:\pazocal{X}\to\mathbb{R}_+$ are the Lagrangian multipliers.

By Lemma \ref{lemma:adjoint_condition}, we can use the adjoint relationship:
\[\left\langle {\mu ,\nabla  \cdot \left( {{\rho _s}F_\mathbf{u}} \right)} \right\rangle  =  - \left\langle {\nabla \mu ,{\rho _s}F_\mathbf{u}} \right\rangle  =  - \left\langle {\nabla \mu  \cdot F_\mathbf{u},{\rho _s}} \right\rangle. \]
Then the Lagrangian can be simplified to
\begin{equation}\label{eq:Lagrangian1}
  \pazocal{L}  = \left\langle {C_\mathbf{u} + \nabla \mu  \cdot F_\mathbf{u} - \kappa \mu -\lambda,{\rho _s}} \right\rangle  + \left\langle {\mu ,{\phi _ + }} \right\rangle.
\end{equation}
The Kuhn-Karush-Tucker (KKT) condition reads:
\begin{itemize}
  \item Stationarity condition:
\begin{equation}\label{eq:stationarity}
\begin{aligned}
  \frac{{\partial \pazocal{L}}}{{\partial \rho_s }} &=& {C_\mathbf{u} + \nabla \mu  \cdot F_\mathbf{u} - \kappa \mu } &= &0\\
  \frac{{\partial \pazocal{L}}}{{\partial u }} &=& \frac{{\partial C}}{{\partial u}} + \nabla \mu  \cdot \frac{{\partial F}}{{\partial u}}  &= &0
  \end{aligned}
\end{equation}
  \item Complementary slackness:
\begin{equation}\label{eq:complementary_slackness}
\mu \cdot(\phi-\nabla  \cdot \left( {\rho_s  \cdot F_\mathbf{u}} \right)) =  {\lambda}\cdot\rho_s   =  0
\end{equation}
\end{itemize}
This implies that when $\rho_s>0$, i.e. for area in $\pazocal{X}$ with nonzero density,
\begin{equation}\label{eq:dual_optimality_condition}
  \begin{array}{c}
{\mathbf{u}^ \star(x) } = \mathop {\arg \min }\limits_{u\in\pazocal{U}} \;C(x,u) + \nabla \mu\cdot F(x,u),\\
C_{\mathbf{u}^\star}  +\nabla \mu \cdot F_{\mathbf{u}^\star}-\kappa \mu = 0,
\end{array}
\end{equation}
 which directly comes from the stationarity condition and utilized the fact that $\rho_s>0\to \lambda=0$.

Replacing $\mu$ with $V$, we get the primal optimization in \eqref{eq:primal_discount}. Besides, from \eqref{eq:Lagrangian1}, if such an solution to the optimal problem exists, the dual objective becomes
\begin{equation}
  {J_d^ \star } = \mathop {\max }\limits_{\lambda ,\mu } \mathop {\min }\limits_{{\rho _s},{\bf{u}}} \pazocal{L} = \left\langle {{\phi _ + },\mu } \right\rangle=J_p^ \star ,
\end{equation}
 which shows that there is no duality gap.
\end{proof}

\section{Optimal safe control using \\density functions}\label{sec:robust_density}
In this section, we present the synthesis method for the optimal safe controller and compare the proposed density function based method to some benchmarks.

\subsection{Optimal Safe Control with Density Function Optimization}
We would like to solve the following constrained optimal control problem:
\begin{equation}\label{eq:prob_def}
\begin{aligned}
  \min\; &\int_0^\infty  {{e^{ - \kappa t}}{C(x,u)}dt} \\
  \mathrm{s.t.}\; & \forall x_0\in\pazocal{X}_0,\forall d[\cdot]\in\pazocal{D}[\cdot],\forall t\in[0,\infty),\\
       & \Phi_{F(\cdot,\mathbf{u}(\cdot),d[\cdot])}(x_0,t)\notin\pazocal{X}_d,
  \end{aligned}
\end{equation}
where $\Phi_{F(\cdot,\mathbf{u}(\cdot),d[\cdot])}$ denotes the flow map of the dynamics in \eqref{eq:dyn_robust} under controller $\mathbf{u}$ and disturbance trajectory $d[\cdot]$.


First, we solve the safe control synthesis problem with the disturbance as a fixed function of state $d(t)=\mathbf{d}(x(t))$. In this case, the constrained optimal control problem can be stated in the density form as
\begin{equation}\label{eq:density_safety_opt0}
  \begin{aligned}
\mathop {\min }\limits_{\bf{u},{\rho _s}}& \left\langle {{C_\mathbf{u}},{\rho _s}} \right\rangle  \\
\mathrm{s.t.} &\left\langle {\mathds{1}_{\pazocal{X}_d},{\rho _s}} \right\rangle\le 0 \\
& \nabla  \cdot \left( {{\rho _s} \cdot F(x,{\bf{u}}(x),\mathbf{d}(x))} \right) = \phi_+-\kappa\rho_s\\
&\forall x \in {\pazocal{X}},{\bf{u}}(x) \in {\pazocal{U}}.
\end{aligned}
\end{equation}
where $\mathds{1}_{\pazocal{X}_d}$ is the indicator function of the danger set $\pazocal{X}_d$. Take the Lagrangian of \eqref{eq:density_safety_opt0}, comparing to \eqref{eq:lagrangian_main_thm}, an additional term shows up due to the safety constraint, and the Lagrangian becomes
\begin{equation}\label{eq:lagrangian_safe_opt}
\resizebox{.98\hsize}{!}{$
\begin{aligned}
 \pazocal{L} &= \left\langle {C_\mathbf{u}-\lambda,{\rho _s}} \right\rangle  + \left\langle {\mu ,{\phi _ + } - \kappa {\rho _s} - \nabla  \cdot \left( {{\rho _s}F_\mathbf{u,d}} \right)} \right\rangle+\left\langle {\sigma,\rho_s\mathds{1}_{\pazocal{X}_d}}  \right\rangle\\
  &= \left\langle {C_\mathbf{u} + \nabla \mu  \cdot F_\mathbf{u,d} - \kappa \mu -\lambda+\sigma \mathds{1}_{\pazocal{X}_d},{\rho _s}} \right\rangle  + \left\langle {\mu ,{\phi _ + }} \right\rangle,
 \end{aligned}
 $}
\end{equation}
where $F_\mathbf{u,d}(x)\doteq F(x,\mathbf{u}(x),\mathbf{d}(x))$ is the dynamics under $\mathbf{u}$ and $\mathbf{d}$, and $\sigma$ is the dual variable induced by the safety constraint. This shows that the safety constraint adds a perturbation term $\sigma \mathds{1}_{\pazocal{X}_d}$ to the optimality condition for the primal value function optimization, and the primal value function problem becomes
\begin{equation}\label{eq:primal_opt_pert}
  \begin{array}{l}
\nabla V \cdot F_{\mathbf{u}^\star,\mathbf{d}} + C_{\mathbf{u}^\star} + \sigma {\mathds{1}_{{\pazocal{X}_d}}} - \kappa V = 0,\\
{{\bf{u}}^ \star }\left( x \right) = \mathop {\arg \min }\limits_{u \in \pazocal{U}} \nabla V \cdot F\left( {x,u,\mathbf{d}(x)} \right)+C(x,u).
\end{array}
\end{equation}
   This relationship is then used to design a primal-dual algorithm that solves the constrained optimal control problem, as shown in Algorithm \ref{alg:primal_dual_opt_con}.
The algorithm iterates between the primal value function optimization and the density function evaluation. In each iteration, the primal optimal control problem is solved with the current $\sigma$ and gives an optimal control policy $\mathbf{u}^\star$, which is then used to evaluate the density function. Then the perturbation term $\sigma$ is updated based on the density function under $\mathbf{u}^\star$, and the iteration continues until the KKT condition is satisfied up to precision $\epsilon$.
\begin{algorithm}[H]
    \caption{Primal-dual algorithm for optimal control with safety constraint}
    \label{alg:primal_dual_opt_con}
    \begin{algorithmic}[1] 
            \State  $\sigma(0) \gets \mathbf{0}$, $k=0$
            \Do
                \State Solve \eqref{eq:primal_opt_pert} with $\sigma(k)$, get $\mathbf{u}^\star$.
                \State Estimate stationary density $\rho_s$ under $\mathbf{u}^\star$.
                \State ${\sigma (k+1)} \gets \max\left\{\mathbf{0},{\sigma(k)} + \alpha \left( {\rho _s   \mathds{1}_{\pazocal{X}_d}} \right)\right\}$.
                \State $k\gets k+1$
            \doWhile {$\left\| {{\rho _s}\mathds{1}_{\pazocal{X}_d}} \right\|_{\infty} > \epsilon $ }
            \State \textbf{return} $\mathbf{u}^\star,\rho_s,V$
    \end{algorithmic}
\end{algorithm}

We then proceed to solve the robust safe control synthesis problem. Based on the solution for fixed $\mathbf{d}$, the robust density function optimization takes the following form:
\begin{equation}\label{eq:density_safety_opt1}
\resizebox{.85\hsize}{!}{$
\begin{aligned}
\mathop {\min }\limits_{\bf{u},{\rho _s}}& \left\langle {{C_\mathbf{u}},{\rho _s}} \right\rangle  \\
\mathrm{s.t.} &\left\{\begin{aligned}\mathop {\max }\limits_{\bf{d}} &\left\langle {\mathds{1}_{\pazocal{X}_d},{\rho _s}} \right\rangle  \\
\mathrm{s.t.}& \nabla  \cdot \left( {{\rho _s} \cdot F_\mathbf{u,d}} \right) = \phi_+-\kappa\rho_s,\mathbf{d}(x)\in\pazocal{D}\end{aligned}\right\}\le0,\\
&\forall x \in {\pazocal{X}},{\bf{u}}(x) \in {\pazocal{U}},
\end{aligned}
$}
\end{equation}
The optimization in \eqref{eq:density_safety_opt1} solves for $\bf{u}$ and $\rho_s$ such that under any possible disturbance as a function of state, the stationary density inside the danger set is zero. This is a robust optimization as the constraint should hold for the worst-case $\mathbf{d}$.

Note that the value inside the parentheses in \eqref{eq:density_safety_opt1} is an optimal control problem in the form of density function. From Theorem \ref{thm:duality}, the density function optimization is equivalent to the following optimal control problem:
\begin{equation}\label{eq:worst_d_opt}
  \begin{aligned}
\mathop {\max }\limits_{\mathbf{d}} &\int_0^\infty  {e^{-\kappa t}{\mathds{1}_{{\pazocal{X}_d}}(x)}} dt\\
\mathrm{s.t.}&~~\dot x = F\left( {x,\mathbf{u}(x),\mathbf{d}(x)} \right),
\end{aligned}
\end{equation}
which can be solved with standard HJB PDE.

\begin{prop}
  Under Assumption \ref{ass:memoryless}, the worst-case disturbance signal is a function of $x$.
\end{prop}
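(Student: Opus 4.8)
The plan is to read the inner problem \eqref{eq:worst_d_opt} as a standard infinite-horizon discounted optimal control problem in which the disturbance policy $\mathbf{d}$ plays the role of the (maximizing) control, and then to invoke Bellman's principle of optimality to conclude that an optimal disturbance can be realized as a stationary state feedback. The structural observation that makes this work is that, once the controller $\mathbf{u}$ is fixed as a function of state, the closed-loop dynamics $\dot x = F(x,\mathbf{u}(x),\mathbf{d})$ are autonomous in $x$, and the only explicit time dependence in the objective enters through the discount $e^{-\kappa t}$. This is exactly the setting in which dynamic programming produces a time-invariant value function of the current state alone.

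First I would define the adversary's value function
\[
W(x_0) = \sup_{\mathbf{d}} \int_0^\infty e^{-\kappa t}\,\mathds{1}_{\pazocal{X}_d}(x(t))\,dt,\quad \dot x = F(x,\mathbf{u}(x),\mathbf{d}),\ x(0)=x_0,
\]
and note it is well defined and finite, since $0\le \mathds{1}_{\pazocal{X}_d}\le 1$ forces $0\le W(x_0)\le 1/\kappa$ for every $x_0$, uniformly over the chosen disturbance. Appealing to the autonomous, infinite-horizon, discounted structure, the value depends on the starting state but not on the starting time, so the associated Hamilton--Jacobi--Bellman equation is
\[
\kappa W(x) = \max_{d\in\pazocal{D}}\big[\,\mathds{1}_{\pazocal{X}_d}(x) + \nabla W(x)\cdot F(x,\mathbf{u}(x),d)\,\big].
\]
The candidate worst-case feedback is the pointwise maximizer
\[
\mathbf{d}^\star(x) \in \arg\max_{d\in\pazocal{D}} \nabla W(x)\cdot F(x,\mathbf{u}(x),d),
\]
which enters only through $W$ and $\nabla W$ evaluated at $x$ and is therefore manifestly a function of $x$.

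The crux is the verification step showing that this state feedback is optimal among \emph{all} admissible disturbance signals, i.e. that granting the adversary knowledge of time or past history confers no advantage. The standard argument uses the identity $\frac{d}{dt}\big(e^{-\kappa t}W(x(t))\big) = e^{-\kappa t}\big(\nabla W\cdot F - \kappa W\big)$, which by the HJB inequality is bounded above by $-e^{-\kappa t}\mathds{1}_{\pazocal{X}_d}(x(t))$ for every admissible $d$, with equality under $\mathbf{d}^\star$. Integrating over $[0,\infty)$ and using the discount to annihilate the boundary term $e^{-\kappa T}W(x(T))\to 0$ shows that $W(x_0)$ upper-bounds the cost achieved by any disturbance and is attained by $\mathbf{d}^\star$. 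Assumption \ref{ass:memoryless}, restricting the disturbance to be memoryless, is precisely what renders the closed loop Markovian and hence the principle of optimality applicable, so that the Hamiltonian may be maximized pointwise and its maximizer realized as a feedback law; the equivalence with the density formulation then follows from Theorem \ref{thm:duality}.

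The main obstacle I anticipate is regularity rather than the dynamic-programming logic: because $\mathds{1}_{\pazocal{X}_d}$ is discontinuous, $W$ need not be classically differentiable, so the HJB equation and the verification identity must be interpreted in the viscosity sense, and the pointwise maximizer $\mathbf{d}^\star$ must be shown to admit a measurable selection. Under the boundedness of $F$ assumed in Theorem \ref{thm:duality} together with compactness of $\pazocal{D}$, a measurable maximizing selection exists and a comparison principle upholds the verification argument; these technical points, not the reduction to state feedback itself, are where the real work lies.
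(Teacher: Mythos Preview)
Your approach is essentially the paper's: fix $\mathbf{u}$ as state feedback so the closed loop is time-invariant, introduce the adversary's value function for the discounted problem \eqref{eq:worst_d_opt}, and read off $\mathbf{d}^\star(x)\in\arg\max_{d\in\pazocal{D}}\nabla W(x)\cdot F(x,\mathbf{u}(x),d)$ as a function of $x$; the paper does exactly this in one line (with $V^d$ in place of your $W$), while your verification step and viscosity/measurable-selection caveats go beyond what the paper provides. One slip to correct: Assumption~\ref{ass:memoryless} asserts that the \emph{legacy controller} $\mathbf{u_0}$---and hence, through $C(x,u)=\|u-\mathbf{u_0}(x)\|^2$ and the density formulation, the synthesized $\mathbf{u}$---is memoryless state feedback, not that the disturbance is; reading it as a restriction on $\mathbf{d}$ would make the proposition circular, whereas your earlier and correct use of it (that a state-feedback $\mathbf{u}$ makes the dynamics autonomous, so the adversary's optimal policy depends only on $x$) matches the paper's ``the status of the differential game is completely determined by $x$.''
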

\begin{proof}
  By Assumption \ref{ass:memoryless}, the input only depends on the current state $x$ and the dynamics is time invariant. Given a state $x$, the status of the differential game is completely determined by $x$. Let $V^d$ be the value function of the optimal control problem in \eqref{eq:worst_d_opt}, the worst case disturbance input at $x$ is then:
  \begin{equation}\label{eq:d^star_x}
    d^{\star} = \mathop {\arg \max }\limits_{d\in\pazocal{D}} \nabla V^d\cdot F(x,\mathbf{u}^{\star}(x),d),
  \end{equation}
  which is a function of $x$.
\end{proof}

Next, we slightly modify the primal-dual algorithm in Algorithm \ref{alg:primal_dual_opt_con} to solve the robust synthesis problem in \eqref{eq:density_safety_opt1}. Starting with the robust constraint, denote the solution to \eqref{eq:worst_d_opt} as $\mathbf{d}^\star_{\mathbf{u}}$, since it only depends on $\mathbf{u}$. Then, the robust optimization in \eqref{eq:density_safety_opt1} is simplified to
\begin{equation}\label{eq:density_safety_opt2}
  \begin{aligned}
\mathop {\min }\limits_{\bf{u},{\rho _s}}& \left\langle {{C_\mathbf{u}},{\rho _s}} \right\rangle  \\
\mathrm{s.t.} &\left\langle {\mathds{1}_{\pazocal{X}_d},{\rho _s}} \right\rangle\le 0 \\
& \nabla  \cdot \left( {{\rho _s} \cdot F(x,{\bf{u}}(x),{\bf{d}^\star_{\bf{u}}}(x))} \right) = \phi_+-\kappa\rho_s\\
&\forall x \in {\pazocal{X}},{\bf{u}}(x) \in {\pazocal{U}}.
\end{aligned}
\end{equation}
The following primal-dual algorithm solves the robust safe synthesis problem.

\begin{algorithm}[H]
    \caption{Primal-dual algorithm for robust safe control synthesis}
    \label{alg:primal_dual_opt_con_robust}
    \begin{algorithmic}[1] 
            \State  $\sigma(0) \gets \mathbf{0}$, $k=0$
            \Do
                \State Solve \eqref{eq:primal_opt_pert} with $\sigma(k)$, get $\mathbf{u}^\star$.
                \State Solve \eqref{eq:worst_d_opt} with $\mathbf{u}^\star$ to get the worst case $\mathbf{d}^\star$
                \State Estimate stationary density $\rho_s$ under $\mathbf{u}^\star$ and $\mathbf{d}^\star$.
                \State ${\sigma (k+1)} \gets \max\left\{\mathbf{0},{\sigma(k)} + \alpha \left( {\rho _s  \mathds{1}_{\pazocal{X}_d}} \right)\right\}$.
                \State $k\gets k+1$
            \doWhile {$\left\| {\rho_s\mathds{1}_{\pazocal{X}_d}} \right\|_{\infty} > \epsilon $ }
            \State \textbf{return} $\mathbf{u}^\star,\rho,V$

    \end{algorithmic}
\end{algorithm}
The only difference to~Algorithm \ref{alg:primal_dual_opt_con} is the additional step that computes the worst-case disturbance $\mathbf{d}^\star$.

Coming back to the problem of synthesizing the optimal safe controller. For a given legacy controller $\mathbf{u_0}$, the implementation of CBF in \eqref{eq:CBF_QP} is minimizing the intervention of the CBF, but it does not necessarily minimize the cumulative intervention over time. To simplify the problem, we make the following assumption.
\begin{ass}\label{ass:memoryless}
 The legacy controller $\mathbf{u_0}$ is a memoryless state feedback controller.
\end{ass}
Then let
\begin{equation}
    C(x,u)=\left\|u-\mathbf{u_0}(x)\right\|^2,
\end{equation}
which fits into the setup in \eqref{eq:density_safety_opt1} and can be solved with Algorithm \ref{alg:primal_dual_opt_con_robust}.
\subsection{Comparison and discussion}
Similar to the control barrier function, the density function-based safe control synthesis can also guarantee safety robustly under disturbance, but solves a horizon optimization instead of solving myopic optimization at every time step. It is expected to perform better than the CBF, as will be shown in Section \ref{sec:result}. In fact, the result of the robust safe control synthesis in \eqref{eq:density_safety_opt1} should be the optimal safe controller.

Comparing to the finite-horizon HJI approach in \cite{mitchell2005time}, the density approach solves two optimal control problems instead of one. In the differential game setup in \cite{mitchell2005time}, the disturbance and control share the same value function and solves a zero-sum game; while in the density optimization in \eqref{eq:density_safety_opt1}, the control and disturbance optimize different cost functions, and the control strategy has to robustly satisfy a constraint that depends on the disturbance strategy. This separation of cost and constraint allows the method to optimize the performance while guaranteeing safety.

Comparing to the occupation measure approach, the occupation measure depends on the input, and does not explicitly use Bellman's principle of optimality. Therefore, there is no value function defined. Density function can be viewed as the projection of the occupation measure when the input is determined by a certain controller, and we enforce that controller to satisfy Bellman's principle of optimality.

\section{Application to Adaptive Cruise Control}\label{sec:result}
Adaptive Cruise Control (ACC) using CBFs was studied in \cite{ames2014control} and we use this example to demonstrate the proposed density approach. We consider a simple kinetic model 
\begin{equation}\label{eq:ACC_dyn}
  \begin{bmatrix}
    \dot{v_l}, &
    \dot{v}, &
    \dot{D}
    \end{bmatrix}^\intercal=
    \begin{bmatrix}
      a_l, &
      a, &
      v_l-v \end{bmatrix}^\intercal,
\end{equation}
where $v_l$ and $a_l$ are the velocity and acceleration of the lead vehicle, $v$ and $a$ are the velocity and acceleration of the ego vehicle, and $D$ is the distance between the two. We assume
\begin{equation}\label{eq:ACC_bound}
  v,v_l\in[0, v_{\max}],\;\;\;a,a_l\in[-a_{\max}, a_{\max}].
\end{equation}
The safety constraint is given by $D\ge D_{\min}$.

For this simple dynamics and simple constraint, there exists a critical CBF:
\begin{equation}\label{eq:ACC_CBF}
  b\left( x \right) = D - {D_{\min }} - \frac{{{v^2} - v_l^2}}{{2{a_{\max }}}}.
\end{equation}
\begin{prop}
  When $b<0$, there exists a disturbance strategy that results in violation of the safety constraint for all possible control strategy; when $b\ge0$, there exists a control strategy that guarantees safety.
\end{prop}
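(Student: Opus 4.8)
The plan is to treat this as a two-player pursuit--evasion game on the kinematic chain \eqref{eq:ACC_dyn}, with the ego acceleration $a$ as the control and the lead acceleration $a_l$ as the disturbance, subject to the box constraints \eqref{eq:ACC_bound}. The central observation is that the term $\frac{v^2-v_l^2}{2a_{\max}}$ appearing in \eqref{eq:ACC_CBF} is exactly the largest amount by which the gap $D$ can be forced to shrink when the lead brakes as hard as possible and the ego responds optimally; I would first establish this identity and then read off both directions of the proposition from it. Writing $\dot D = v_l - v$, the gap decreases only while $v>v_l$, so the whole argument reduces to bounding the cumulative closure $\int_0^t (v-v_l)\,ds$ along the braking transient.

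For the first claim I would fix the disturbance strategy $a_l=-a_{\max}$ until $v_l$ reaches $0$ and $a_l=0$ thereafter, so that $v_l(t)=\max\{v_l-a_{\max}t,\,0\}$ is pinned to its smallest possible value at every instant. Against this fixed lead trajectory the ego minimizes $\max_t \int_0^t (v-v_l)\,ds$; since $v_l(\cdot)$ is now fixed and $v(s)$ enters the integrand with a positive sign, the cumulative closure is minimized pointwise in $t$ by making $v(s)$ as small as possible, i.e.\ by the saturated response $a=-a_{\max}$, which I would justify by a monotone comparison argument. Integrating $v-v_l$ over the two phases---both vehicles braking, then the lead stopped while the ego finishes braking---gives a peak closure of exactly $\frac{v^2-v_l^2}{2a_{\max}}$. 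Because $b<0$ means $D-D_{\min}<\frac{v^2-v_l^2}{2a_{\max}}$, even this best ego response drives $D$ strictly below $D_{\min}$, so the constraint is violated for every control strategy.

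For the converse I would verify that $b$ in \eqref{eq:ACC_CBF} satisfies the robust barrier condition \eqref{eq:CBF}. Differentiating gives $\dot b = (v_l-v)-\frac{v\,a - v_l\,a_l}{a_{\max}}$; the disturbance minimizes $\dot b$ by taking $a_l=-a_{\max}$, which is legitimate since $v_l\ge 0$, after which the choice $a=-a_{\max}$ yields $\dot b = 0$ in the worst case, so $\dot b + \alpha(b) = \alpha(b)\ge 0$ on $\{b\ge0\}$ for any class-$\pazocal{K}$ function $\alpha$. Hence $\{b\ge0\}$ is robustly controlled invariant, and by the invariance property noted after \eqref{eq:CBF_QP} the controller \eqref{eq:CBF_QP} keeps the state in $\{b\ge0\}$ against every disturbance. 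To connect this to the actual safety constraint I would observe that any interior local minimum of $D(t)$ occurs where $\dot D = v_l-v=0$, i.e.\ $v=v_l$, at which point $\frac{v^2-v_l^2}{2a_{\max}}=0$ and therefore $b = D - D_{\min}$; invariance $b\ge0$ then forces $D\ge D_{\min}$ at every such minimum, hence for all time.

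The main obstacle is rigor at the constraint boundaries rather than the core energy bookkeeping. Establishing that saturated braking is simultaneously the ego's best response and that lead braking is the worst disturbance requires a genuine saddle-point comparison lemma, and the velocity floors $v,v_l\ge 0$ and ceiling $v_{\max}$ introduce phase switches (lead stops, then ego stops) across which $b$ and the associated value function are only piecewise smooth. Consequently the differentiation in the barrier check and the pointwise-domination argument must be carried out on each phase and matched at the switching surfaces. I expect the clean identity $\frac{v^2-v_l^2}{2a_{\max}}$ to survive these cases, but the careful handling of $v_l=0$, $v=0$, and the non-smoothness of the $\min$ and $\max$ operations is where the real work lies.
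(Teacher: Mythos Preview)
Your proposal is correct and follows the same line as the paper's own proof: both identify the extremal strategies as maximal braking $a=a_l=-a_{\max}$ and read the result off the closed-form stopping-distance term $\tfrac{v^2-v_l^2}{2a_{\max}}$. The paper compresses this into a one-line ``simple algebraic calculation,'' whereas you additionally route the $b\ge0$ direction through the CBF invariance condition \eqref{eq:CBF} and add the phase/boundary bookkeeping; that is a faithful elaboration rather than a different argument.
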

\begin{proof}
  The optimal control and worst-case disturbance strategies are both taking the minimum acceleration $-a_{\max}$. Simple algebraic calculation proves the proposition.
\end{proof}
Then, the CBF is implemented with the QP shown in \eqref{eq:CBF_QP}. For simplicity, we let the class-$\pazocal{K}$ function to be a linear function $\alpha\cdot b$ with tuning parameter $\alpha$.
\begin{figure*}
   \centering
   \begin{minipage}[t]{5cm}
		\includegraphics[width=5.5cm]{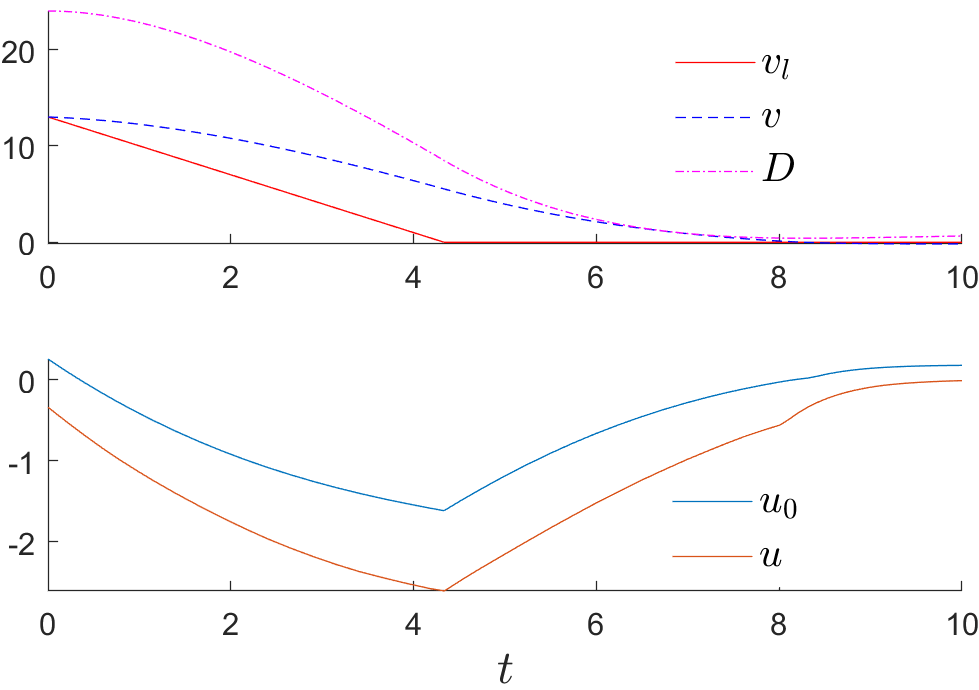}
\center{(a)}
	\end{minipage}
    \begin{minipage}[t]{5cm}
		\includegraphics[width=5.5cm]{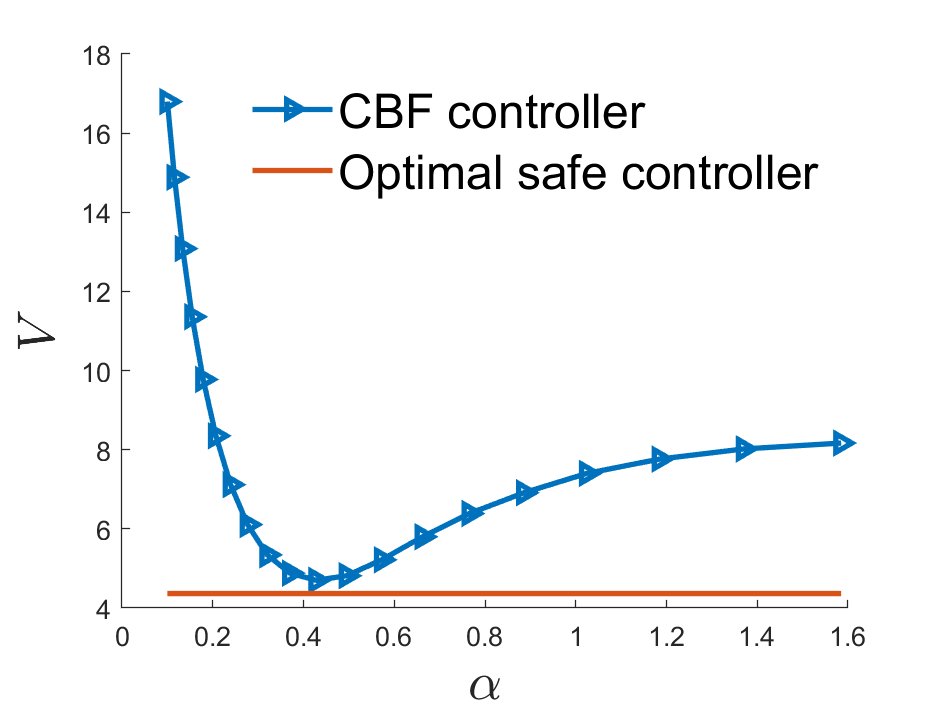}
\center{(b)}
	\end{minipage}
    \begin{minipage}[t]{5cm}
		\includegraphics[width=5.5cm]{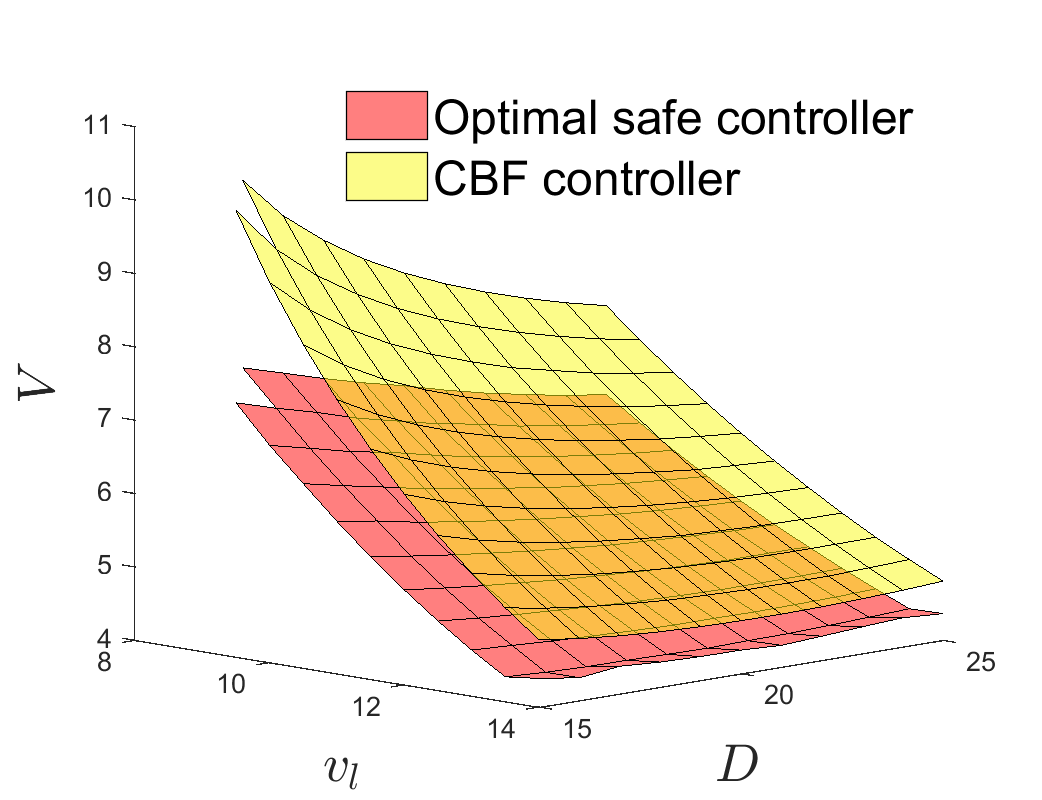}
\center{(c)}
	\end{minipage}
   \caption{Simulation result}\label{fig:CBF_comparison}
   \vspace{-8pt}
 \end{figure*}
The design of $\mathbf{u_0}$ follows a simple LQR approach. The goal is to maintain a desired time headway $\tau_{des}=1.4s$, i.e.
\begin{equation}\label{eq:LQR_cost}
  V = \int \left(\left( D - \tau _{des}v \right)^2 + R{a^2}\right)dt.
\end{equation}
After solving the Riccati equation and obtained the gains $K_v$ and $K_D$, $\mathbf{u_0}$ is defined as
\begin{equation}\label{eq:ACC_u_0}
  \mathbf{u_0}(x)=\sat_{[-a_{\max},a_{\max}]} (K_v(v-v_l)+K_d(D-\tau_{des}v)),
\end{equation}
where $\sat_S(\cdot)$ saturates the signal to keep it within $S$.

With the $\mathbf{u_0}$ given, the robust density optimization is solved with the primal-dual algorithm introduced in Section \ref{sec:robust_density}. The HJB PDE is solved by discretizing the state space and integrating numerically, and the density function is evaluated with the two-step ODE procedure introduced in Section \ref{sec:density_intro}. The resulting controller $\mathbf{u}$ is an array that assigns value to every grid point in the HJB computation and we use linear interpolation to obtain a continuous controller.

 To compare the controller obtained with density optimization and the CBF controller, we pick one initial condition ${\begin{bmatrix}13 & 13 & 25\end{bmatrix}}^\intercal$ and vary $\alpha$ in the CBF implementation.

Fig. \ref{fig:CBF_comparison}(a) shows one simulation run with the optimal safe controller and the safety constraint is satisfied under the worst-case disturbance. Fig. \ref{fig:CBF_comparison}(b) shows the induced cost of CBF with different values of $\alpha$, and the cost associated with the optimal safe controller is lower than all of them. Fig. \ref{fig:CBF_comparison}(c) further shows the cost with different initial conditions, and the optimal safe controller clearly outperforms the CBF. In problems where an analytical and exact CBF is not known, one needs to use numerical methods to get an CBF, which is inevitably conservative. In those cases, the performance gap is expected to be even larger.
\section{Conclusion}\label{sec:conclusion}
This paper propose a density function approach for safe control synthesis. The approach utilizes the duality between density function and value function and constructs a primal-dual algorithm that solves the constrained optimal problem. By solving the worst-case disturbance as an optimal control problem, robust safety is guaranteed. When applied to the design of optimal safe control synthesis, since the proposed approach optimize the cumulative intervention, the obtained controller outperforms myopic controller such as the CBF controller.

One issue with the proposed approach is the computation complexity, which is dominated by the complexity of the HJB PDE. Possible solutions to this issue may include low-complexity approximation and parametrization of the value function and density function.

\balance
\bibliographystyle{abbrv}
\bibliography{density_bib}
\end{document}